\newtheorem{theorem}{Theorem}[section]
\newtheorem{lemma}[theorem]{Lemma}
\newcommand{\CC}{{\mathbb C}}
\newcommand{\DD}{{\mathbb D}}
\newcommand{\TT}{{\mathbb T}}
\newcommand{\cB}{{\mathcal B}}
\newcommand{\cD}{{\mathcal D}}
\newcommand{\cH}{{\mathcal H}}
\newcommand{\cM}{{\mathcal M}}
\DeclareMathOperator{\aut}{\rm Aut}
\DeclareMathOperator{\hol}{\rm Hol}
\DeclareMathOperator{\bmoa}{\rm BMOA}
\DeclareMathOperator{\vmoa}{\rm VMOA}
\title[Gleason--Kahane--\.Zelazko theorem for modules]{A Gleason--Kahane--\.Zelazko theorem for modules and applications to holomorphic function spaces}
\author{Javad Mashreghi and Thomas Ransford}
\begin{document}
\maketitle

\begin{abstract}
We generalize the Gleason--Kahane--\.Zelazko theorem to modules. 
As an application, we show that every linear functional on a Hardy space 
that is non-zero on outer functions is a multiple of a point evaluation. 
A further consequence is that every  linear endomorphism of a Hardy space 
that maps outer functions to nowhere-zero functions 
is a weighted composition operator. 
In neither case is continuity assumed. 
We also consider some extensions to other function spaces, 
including the Bergman, Dirichlet and Besov  spaces, the little Bloch space  and $\vmoa$.
\end{abstract}

\maketitle

\section{A Gleason--Kahane--\.Zelazko theorem for modules}\label{S:GKZ}

The following result, often known as the Gleason--Kahane--\.Zelazko (GKZ) theorem,
char\-acter\-izes multiplicativity of linear functionals on Banach algebras. 
For commutative algebras it was obtained independently 
by Gleason \cite{Gl67} and by  Kahane and \.Zelazko \cite{KZ68}.
Subsequently \.Zelazko \cite{Ze68}  extended it to the  non-commutative case.
The original proofs used results about entire functions. 
An elementary proof can be found in \cite{RS81}.
Note that continuity is not assumed.

\begin{theorem}\label{T:GKZ}
Let $A$ be a complex unital Banach algebra, 
and let $\Lambda:A\to\CC$ be a linear functional such that
$\Lambda(1)=1$ and $\Lambda(a)\ne0$ for all invertible elements $a\in A$. 
Then $\Lambda(ab)=\Lambda(a)\Lambda(b)$ for all $a,b\in A$.
\end{theorem}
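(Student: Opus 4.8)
The plan is to exploit the hypothesis through a single structural fact — that $\Lambda$ never annihilates an invertible element — and to funnel everything through the exponential function. The first step is to record the key consequence: for every $a\in A$ the element $a-\Lambda(a)1$ lies in $\ker\Lambda$ and is therefore non-invertible, so that $\Lambda(a)\in\sigma(a)$. Since the spectral radius is dominated by the norm, this yields $|\Lambda(a)|\le\|a\|$, so $\Lambda$ is automatically continuous of norm one. This is where the ``no continuity assumed'' clause gets discharged, and the bound will be needed immediately.

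Next I would fix $a\in A$ and consider the entire function $f(z)=\Lambda(\exp(za))$. Because $\exp(za)$ is invertible for every $z$, the hypothesis forces $f$ to be zero-free, while the norm bound on $\Lambda$ makes $f$ of exponential type with $f(0)=\Lambda(1)=1$. By Hadamard's factorization theorem a zero-free entire function of order at most one is an exponential, and matching $f(0)=1$ with $f'(0)=\Lambda(a)$ gives $f(z)=e^{\Lambda(a)z}$. Comparing the coefficient of $z^2$ in the two power series yields $\Lambda(a^2)=\Lambda(a)^2$, and polarizing via $a\mapsto a+b$ produces the Jordan identity
\begin{equation*}
\Lambda(ab+ba)=2\Lambda(a)\Lambda(b)\qquad(a,b\in A).
\end{equation*}
In the commutative case, where $ab=ba$, this already finishes the proof.

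The main obstacle is the non-commutative case, where the symmetrized identity must be upgraded to genuine multiplicativity; the underlying principle is that a Jordan homomorphism into the field $\CC$ is automatically a homomorphism. To carry this out I would linearize $\Lambda(a^2)=\Lambda(a)^2$ to obtain first $\Lambda(aba)=\Lambda(a)^2\Lambda(b)$ and then, linearizing again in the outer variable, $\Lambda(abc)+\Lambda(cba)=2\Lambda(a)\Lambda(b)\Lambda(c)$ for all $a,b,c\in A$. Applying the latter with $c=ab$, and using $\Lambda\bigl((ab)^2\bigr)=\Lambda(ab)^2$ together with $\Lambda(ab^2a)=\Lambda(a)^2\Lambda(b)^2$, the relation collapses to
\begin{equation*}
\bigl(\Lambda(ab)-\Lambda(a)\Lambda(b)\bigr)^2=0.
\end{equation*}
Since $\CC$ has no nonzero nilpotents, this forces $\Lambda(ab)=\Lambda(a)\Lambda(b)$, completing the argument. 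The delicate points to watch are verifying that $f$ genuinely has order at most one (so that Hadamard applies), and the careful bookkeeping in the successive linearizations leading to the triple identity.
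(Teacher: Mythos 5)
Your proposal is correct, but note that the paper itself offers no proof of Theorem~\ref{T:GKZ}: it states the result as known, citing Gleason \cite{Gl67}, Kahane--\.Zelazko \cite{KZ68} and \.Zelazko \cite{Ze68} for the original arguments (``the original proofs used results about entire functions'') and Roitman--Sternfeld \cite{RS81} for an elementary proof. What you have written is essentially a reconstruction of that classical entire-function proof: the observation that $\Lambda(a)\in\sigma(a)$ (hence $\|\Lambda\|=1$), the zero-free entire function $z\mapsto\Lambda(\exp(za))$ of exponential type handled by Hadamard factorization to get $\Lambda(a^2)=\Lambda(a)^2$, and then the standard upgrade of the resulting Jordan functional to a multiplicative one in the non-commutative case (this last reduction is the device used in \.Zelazko's extension and in Rudin's textbook treatment). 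All the steps check out. The one place where your ``linearization'' language papers over real bookkeeping is the identity $\Lambda(aba)=\Lambda(a)^2\Lambda(b)$: this does not follow by formally polarizing $\Lambda(a^2)=\Lambda(a)^2$, but needs the algebraic identity
\begin{equation*}
2aba \;=\; a(ab+ba)+(ab+ba)a-\bigl(a^2b+ba^2\bigr),
\end{equation*}
after which applying $\Lambda$ and the Jordan identity twice gives $2\Lambda(aba)=4\Lambda(a)^2\Lambda(b)-2\Lambda(a^2)\Lambda(b)=2\Lambda(a)^2\Lambda(b)$. With that identity made explicit, the outer linearization, the substitution $c=ab$, and the conclusion $\bigl(\Lambda(ab)-\Lambda(a)\Lambda(b)\bigr)^2=0$ are all exactly right, so your write-up supplies a complete proof of a statement the paper only quotes.
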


We extend the GKZ-theorem to $A$-modules, as follows.

\begin{theorem}\label{T:GKZmodule}
Let $A$ be a  complex unital Banach algebra, let $M$ be a left $A$-module,
and let $S$ be a non-empty subset of $M$ satisfying the following conditions:
\begin{itemize}
\setlength{\itemindent}{15pt}
\item[(S1)] $S$ generates $M$ as an $A$-module;
\item[(S2)] if $a\in A$ is invertible and $s\in S$, then $a s\in S$;
\item[(S3)] for all $s_1,s_2\in S$, there exist $a_1,a_2\in A$ such that 
$a_j S\subset S~(j=1,2)$ and $a_1 s_1=a_2 s_2$.
\end{itemize}
Let $\Lambda:M\to\CC$ be a linear functional such that 
$\Lambda(s)\ne0$ for all $s\in S$. 
Then there exists a unique character $\chi$ on $A$ such that
\begin{equation}\label{E:lambdachi}
\Lambda(a m)=\chi(a)\Lambda(m) \qquad(a\in A,~m\in M).
\end{equation}
\end{theorem}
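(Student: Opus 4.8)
The plan is to bootstrap the module statement from the classical GKZ theorem (Theorem~\ref{T:GKZ}) by constructing, from the given functional $\Lambda$ on $M$, an auxiliary linear functional on the algebra $A$ to which Theorem~\ref{T:GKZ} applies. First I would fix a single element $s_0\in S$ and define a candidate character by $\chi(a):=\Lambda(a s_0)/\Lambda(s_0)$; this is manifestly linear in $a$, and $\chi(1)=1$. To invoke Theorem~\ref{T:GKZ} I must check that $\chi(a)\ne 0$ whenever $a$ is invertible in $A$: but if $a$ is invertible then (S2) gives $a s_0\in S$, whence $\Lambda(a s_0)\ne 0$ by hypothesis, so $\chi(a)\ne 0$. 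Thus $\chi$ is a character on $A$, i.e. $\chi(ab)=\chi(a)\chi(b)$ for all $a,b$.

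The substantive work is then to show that this $\chi$, built from the single seed $s_0$, actually controls $\Lambda$ on \emph{all} of $M$ via \eqref{E:lambdachi}. The key intermediate claim is that the normalized functional is independent of the choice of seed: I would show that $\Lambda(as)=\chi(a)\Lambda(s)$ holds for every $s\in S$ and every $a\in A$, not just for $s=s_0$. Here condition (S3) is the engine. Given $s\in S$, apply (S3) to the pair $(s_0,s)$ to obtain $a_1,a_2\in A$ with $a_1 S\subset S$, $a_2 S\subset S$, and $a_1 s_0=a_2 s=:t$. For any $b\in A$, the elements $b a_1$ and $a_1$ both map $s_0$ into configurations I can evaluate, and using the character property of $\chi$ together with the relation $a_1 s_0=a_2 s$ I can transfer the multiplicativity across the bridge element $t$. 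Concretely, I expect to derive $\chi(a)\Lambda(s)=\Lambda(as)$ by computing $\Lambda(a a_2 s)$ in two ways — once pushing $a$ through the $s$-side and once rewriting $a_2 s=a_1 s_0$ and using $\chi(a a_1)=\chi(a)\chi(a_1)$ — and cancelling the nonzero scalar $\Lambda(t)=\chi(a_1)\Lambda(s_0)$. This is the step I expect to be the main obstacle, because it requires carefully exploiting that $a_1 S\subset S$ (so that intermediate evaluations are nonzero and the seed-based identity applies) and marshalling the associativity of the module action together with multiplicativity of $\chi$.

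Once \eqref{E:lambdachi} is established for all $s\in S$, I would extend it to all of $M$ using (S1): every $m\in M$ is a finite $A$-combination $m=\sum_j b_j s_j$ with $b_j\in A$ and $s_j\in S$. Then for any $a\in A$,
\begin{equation*}
\Lambda(am)=\sum_j \Lambda(a b_j s_j)=\sum_j \chi(a b_j)\Lambda(s_j)=\chi(a)\sum_j \chi(b_j)\Lambda(s_j)=\chi(a)\Lambda(m),
\end{equation*}
using linearity of $\Lambda$, the identity just proved on $S$ applied with the algebra element $ab_j$ (respectively $b_j$), and multiplicativity of $\chi$; the last equality re-bundles the sum back into $\Lambda(m)$. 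This proves \eqref{E:lambdachi} in full.

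Finally, for uniqueness I would observe that any character $\chi'$ satisfying \eqref{E:lambdachi} must agree with $\chi$ on every invertible element, and more directly: evaluating \eqref{E:lambdachi} at $m=s_0$ forces $\chi'(a)\Lambda(s_0)=\Lambda(a s_0)=\chi(a)\Lambda(s_0)$, and since $\Lambda(s_0)\ne 0$ we conclude $\chi'=\chi$. This pins down the character uniquely and completes the proof.
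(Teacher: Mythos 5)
Your overall architecture --- seed functionals normalized by $\Lambda$, the classical GKZ theorem (Theorem~\ref{T:GKZ}) to get multiplicativity, the (S3) bridge, then (S1) to extend to all of $M$, and uniqueness by evaluating at a seed --- is exactly that of the paper, and your first step (GKZ applied to $\chi=\chi_{s_0}$), your (S1) extension, and your uniqueness argument are all correct as written. But the step you yourself flag as the main obstacle contains a genuine gap, and as described it does not close. Write $\chi_s(b):=\Lambda(bs)/\Lambda(s)$. Your two evaluations of $\Lambda(a a_2 s)$ give: on one side, by definition, $\chi_s(aa_2)\Lambda(s)$; on the other, via $a_2s=a_1s_0$ and multiplicativity of $\chi$, the value $\chi(a)\chi(a_1)\Lambda(s_0)=\chi(a)\Lambda(t)=\chi(a)\chi_s(a_2)\Lambda(s)$. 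Equating and cancelling the nonzero scalars yields only
\[
\chi_s(a a_2)=\chi(a)\,\chi_s(a_2)\qquad(a\in A),
\]
which is the identity \eqref{E:lambdachi} for the element $a_2s\in S$, not for $s$ itself. To pass from this to $\chi_s(a)=\chi(a)$ you would need to split $\chi_s(aa_2)$ as $\chi_s(a)\chi_s(a_2)$ and cancel $\chi_s(a_2)\ne0$; but that requires $\chi_s$ to be multiplicative, which is precisely what is at stake. Nothing in (S3) makes $a_2$ invertible, so you cannot undo the module action either, and iterating the bridge (applying (S3) to the pair $(t,s)$, and so on) only ever reproduces the identity for elements of the form $bs$, never for $s$ itself.

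The repair is small, but it is exactly the key idea of the paper's proof: apply Theorem~\ref{T:GKZ} not just to the single seed $s_0$ but to \emph{every} $s\in S$. Each $\chi_s$ is linear, satisfies $\chi_s(1)=1$, and is nonvanishing on invertible elements by (S2), hence is a character on $A$. With multiplicativity of $\chi_s$ in hand, the displayed relation becomes $\chi_s(a)\chi_s(a_2)=\chi(a)\chi_s(a_2)$; since $a_2S\subset S$ gives $a_2s\in S$ and thus $\chi_s(a_2)=\Lambda(a_2s)/\Lambda(s)\ne0$, you may cancel to conclude $\chi_s=\chi$. This is precisely how the paper argues (it phrases the cancellation as dividing the identity $\chi_{s_1}(aa_1)\Lambda(s_1)=\chi_{s_2}(aa_2)\Lambda(s_2)$ by $\chi_{s_1}(a_1)\Lambda(s_1)=\chi_{s_2}(a_2)\Lambda(s_2)$, using multiplicativity of both characters). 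Once $\chi_s=\chi$ for all $s\in S$, the rest of your argument goes through verbatim.
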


\begin{remarks*}
(i) The $A$-module $M$ is not assumed to carry any topological structure.

(ii) This result contains the GKZ-theorem as a special case
(take $M:=A$ and $S:=A^{-1}$). 
Note, however, that the GKZ-theorem is used in its proof.
\end{remarks*}

\begin{proof}
Uniqueness of $\chi$ is clear. 
Indeed, fixing any $s\in S$, by \eqref{E:lambdachi} we must have
\[
\chi(a)=\Lambda(a s)/\Lambda(s) \qquad(a\in A).
\]

To prove existence, we begin by deriving inspiration from this last equation. 
Given $s\in S$,  define $\chi_s:A\to\CC$ by
\[
\chi_s(a):=\Lambda(a s)/\Lambda(s) \qquad(a\in A).
\]
Clearly $\chi_s$ is a linear functional on $A$ satisfying $\chi_s(1)=1$, 
and from property~(S2) we have $\chi_s(a)\ne0$ for all invertible $a\in A$.
By Theorem~\ref{T:GKZ}, it follows that $\chi_s$ is a character on $A$.

Next, given $s_1,s_2\in S$,  property~(S3)
yields the existence of elements $a_1,a_2\in A$ such that 
$a_j S\subset S~(j=1,2)$ and $a_1 s_1=a_2 s_2$. 
Then $\Lambda(a_1 s_1)=\Lambda(a_2 s_2)$, whence
\begin{equation}\label{E:chieqn1}
\chi_{s_1}(a_1)\Lambda(s_1)=\chi_{s_2}(a_2)\Lambda(s_2).
\end{equation}
Equally, for each $a\in A$, we have $aa_1 s_1=aa_2 s_2$, whence
\begin{equation}\label{E:chieqn2}
\chi_{s_1}(aa_1)\Lambda(s_1)=\chi_{s_2}(aa_2)\Lambda(s_2).
\end{equation}
Now both sides of \eqref{E:chieqn1} are non-zero, because $a_j s_j\in S$.
Thus we may divide \eqref{E:chieqn2} by \eqref{E:chieqn1} to obtain
\[
\chi_{s_1}(a)=\chi_{s_2}(a).
\]
In other words, $\chi_s$ is independent of $s$. 
Let us call it simply $\chi$. 
Note that we then have
\[
\Lambda(a s)=\chi(a)\Lambda(s) \qquad(a\in A,~ s\in S).
\]

Finally, let $a\in A$ and $m\in M$. 
By property~(S1), there exist $a_1,\dots,a_n\in A$ and $s_1,\dots,s_n\in S$ 
such that $m=\sum_1^na_j s_j$. Then we have
\[
\Lambda(a m)
=\sum_1^n\Lambda(aa_j s_j)
=\sum_1^n \chi(aa_j)\Lambda(s_j)
=\chi(a)\sum_1^n \chi(a_j)\Lambda(s_j)
=\chi(a)\Lambda(m),
\]
which gives \eqref{E:lambdachi}.
\end{proof}

\section{Applications to Hardy spaces}\label{S:Hardy}

Let $\DD$ denote the open unit disk and $\TT$ denote the unit circle. 
We write $\hol(\DD)$ for the space of holomorphic functions on $\DD$.
The Hardy spaces on $\DD$ are defined as follows:
\begin{align*}
H^p
&:=\Bigl\{f\in\hol(\DD):\sup_{r<1}\int_0^{2\pi}|f(re^{i\theta})|^p\,d\theta<\infty\Bigr\}
\quad(0<p<\infty),\\
H^\infty
&:=\Bigl\{f\in\hol(\DD):\sup_{z\in\DD}|f(z)|<\infty\Bigr\}.
\end{align*}
We say that $g\in\hol(\DD)$ is {\em outer} if there exists
$G:\TT\to[0,\infty)$ with $\log G\in L^1(\TT)$ such that
\[
g(z)=\exp\Bigl(\int_0^{2\pi}\frac{e^{i\theta}+z}{e^{i\theta}-z}
\log G(e^{i\theta})\,\frac{d\theta}{2\pi}\Bigr)\
\quad(z\in\DD).
\]
In this case, $g\in H^p$ if and only if $G\in L^p(\TT)$. 
For background on Hardy spaces, we refer to \cite{Du70}.

\begin{theorem}\label{T:Hplinfun}
Let $0< p\le \infty$ and let $\Lambda:H^p\to\CC$ be a linear functional  
such that $\Lambda(g)\ne0$ for all outer functions $g\in H^p$.
Then there exist $w\in \DD$ and $c\in\CC\setminus\{0\}$ such that
\[
\Lambda(f)=cf(w) \qquad(f\in H^p).
\]
\end{theorem}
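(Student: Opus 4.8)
The plan is to realize $H^p$ as a module over a suitable Banach algebra and then invoke Theorem~\ref{T:GKZmodule}. The natural choice is $A:=H^\infty$, which is a unital Banach algebra under the supremum norm, and $M:=H^p$, which becomes a left $A$-module under pointwise multiplication: if $\phi\in H^\infty$ and $f\in H^p$, then $\phi f\in H^p$ with $\|\phi f\|_p\le\|\phi\|_\infty\|f\|_p$. For the distinguished set I would take $S$ to be the set of outer functions in $H^p$. The hypothesis on $\Lambda$ is precisely that $\Lambda(s)\ne0$ for all $s\in S$, so to apply the theorem I must verify conditions (S1)--(S3) for this choice.

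The key structural facts about outer functions make (S1)--(S3) tractable. For (S2), note that the invertible elements of $H^\infty$ are exactly the bounded outer functions that are bounded away from zero, and the product of two outer functions is outer; this gives $as\in S$ whenever $a\in A^{-1}$ and $s\in S$. For (S1), I would use the inner-outer factorization: every $f\in H^p$ factors as $f=\Theta g$ with $\Theta$ inner (hence $\Theta\in H^\infty=A$) and $g$ outer (hence $g\in S$), so $S$ generates $M$; in fact a single product $a\cdot s$ already recovers $f$. For (S3), given two outer functions $s_1,s_2\in H^p$ with boundary moduli $G_1,G_2$, I would seek outer multipliers $a_1,a_2\in H^\infty$ mapping $S$ into $S$ and satisfying $a_1s_1=a_2s_2$; taking $a_j$ to be bounded outer functions whose boundary moduli are chosen so that $|a_1|G_1=|a_2|G_2$ a.e.\ (for instance $a_j$ built from $\min(G_1,G_2)/G_j$ suitably truncated to stay in $H^\infty$) should force $a_1s_1=a_2s_2$, since two outer functions with the same boundary modulus coincide. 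The point is that multiplication by a bounded outer function preserves outerness, so $a_jS\subset S$.

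Once (S1)--(S3) are checked, Theorem~\ref{T:GKZmodule} produces a unique character $\chi$ on $A=H^\infty$ with $\Lambda(\phi f)=\chi(\phi)\Lambda(f)$ for all $\phi\in H^\infty$, $f\in H^p$. The final task is to identify $\chi$. Every character on $H^\infty$ restricts to a multiplicative functional, and I would exploit this by evaluating $\chi$ on the coordinate function $z$: setting $w:=\chi(z)$, multiplicativity gives $\chi(\phi)=\phi(w)$ for all polynomials $\phi$, and then for all $\phi\in H^\infty$ by a density or approximation argument, provided $w\in\DD$. To see $w\in\DD$, I would use that $\chi$ is a character on the disk algebra $A(\DD)\subset H^\infty$, whose characters are exactly point evaluations at points of $\overline{\DD}$; the constraint that $\chi$ does not vanish on invertibles, together with $z-w$ being non-invertible only for $w$ in the spectrum, pins down $w$, and the outer-function hypothesis rules out $w\in\TT$.

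The main obstacle I anticipate is condition (S3): one must produce the multipliers $a_1,a_2$ explicitly and verify that they are bounded (so they lie in $H^\infty$) while still mapping outer functions to outer functions. Arranging $|a_1|G_1=|a_2|G_2$ a.e.\ with both $a_j$ bounded and outer requires care, since $G_1,G_2$ may be unbounded or vanish on sets of positive measure; the natural fix is to take the common boundary modulus to be $\min(1,G_1/G_2)$-type expressions truncated to keep everything in $L^\infty$ while preserving integrability of the logarithm. The secondary obstacle is the identification step, specifically confirming $w\in\DD$ rather than merely $w\in\overline{\DD}$ and then extending $\chi(\phi)=\phi(w)$ from the disk algebra to all of $H^\infty$, which I expect to follow from the fact that the functional $f\mapsto\Lambda(f)/\Lambda(g_0)$ (for a fixed outer $g_0$) agrees with evaluation at $w$ on a dense enough subclass.
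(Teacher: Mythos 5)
Your module-theoretic skeleton is exactly the paper's: take $A:=H^\infty$, $M:=H^p$, $S:=$ the set of outer functions in $H^p$, and verify (S1)--(S3). Your verifications of (S1) and (S2) are correct, and your construction for (S3) works, indeed more smoothly than you fear: if $a_j$ is the outer function with boundary modulus $\min(G_1,G_2)/G_j$, then no truncation is needed, since this ratio is automatically at most $1$ (so $a_j\in H^\infty$ with $\|a_j\|_\infty\le1$), its logarithm lies in $L^1(\TT)$ because $\log G_1$ and $\log G_2$ do, and $G_j$ cannot vanish on a set of positive measure for the same reason. Then $a_1s_1$ and $a_2s_2$ are outer with the same boundary modulus $\min(G_1,G_2)$, hence equal, and multiplication by the bounded outer $a_j$ maps $S$ into $S$. (The paper instead cites Duren's representation of an outer function as a quotient of two bounded outer functions; it is the same idea.)

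The genuine gap is in your endgame. Having obtained the character $\chi$ on $H^\infty$ with $\Lambda(hf)=\chi(h)\Lambda(f)$, you propose to identify $\chi$ with evaluation at $w:=\chi(u)$ by ``a density or approximation argument,'' and then to identify $\Lambda$ itself by showing it agrees with $cf(w)$ ``on a dense enough subclass.'' Neither step is available: polynomials are not dense in $H^\infty$ in the sup norm (their closure is the disk algebra), and, fatally, $\Lambda$ is not assumed continuous, so no density argument on $H^p$ can pin it down --- removing the continuity hypothesis is the whole point of the theorem, and you cannot use the conclusion's continuity in its own proof. The correct finish is purely algebraic and needs only the single value $\chi(u-w1)=0$. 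First, $w\in\DD$: for $|\lambda|\ge1$ the function $u-\lambda 1$ is outer and lies in $H^p$, so $0\ne\Lambda(u-\lambda 1)=\chi(u-\lambda 1)\Lambda(1)$, whence $w=\chi(u)\ne\lambda$. Then, for any $f\in H^p$, set $k:=(f-f(w))/(u-w1)$; this $k$ belongs to $H^p$ (difference quotients preserve $H^p$), and $f=f(w)1+(u-w1)k$, so
\[
\Lambda(f)=f(w)\Lambda(1)+\chi(u-w1)\Lambda(k)=cf(w).
\]
This replaces both of your approximation steps at once, and shows in particular that you never need to know $\chi(h)=h(w)$ for general $h\in H^\infty$.
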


Note that  continuity of $\Lambda$  is not assumed.

\begin{proof}
We apply Theorem~\ref{T:GKZmodule} with $M:=H^p$ and $A:=H^\infty$, 
taking  $S$ to be the set of outer functions in $H^p$.
Property~(S1) holds because, 
by the canonical  factorization theorem \cite[Theorem~2.8]{Du70},
every function in $H^p$ can be expressed in an essentially unique way 
as the product of an inner function (which is in $H^\infty$) 
and an outer function in $H^p$.
Property~(S2) holds because every invertible function in $h\in H^\infty$ is outer: 
indeed, multiplying together  the inner-outer factorizations of $h$ and $1/h$, 
we obtain a factorization of $1$, and by uniqueness 
it follows  that the inner factors of $h$ and $1/h$  must both be $1$.
Property~(S3) holds because every outer function can be represented 
as the quotient of two bounded outer functions \cite[Proof of Theorem~2.1]{Du70}. 
Thus, by Theorem~\ref{T:GKZmodule}, 
there exists a character $\chi$ on $H^\infty$ such that
\begin{equation}\label{E:Hplinfun}
\Lambda(hf)=\chi(h)\Lambda(f) \qquad(f\in H^p,~h\in H^\infty).
\end{equation}

Let $c:=\Lambda(1)$ and $w:=\chi(u)$ 
(where $u$ denotes the function $u(z):=z$).
As $1$ is an outer function, we have $c\in\CC\setminus\{0\}$. 
Also,  for all $\lambda\in\CC\setminus\DD$, 
the function $(u-\lambda 1)$ is outer, 
so we have  $\Lambda(u-\lambda 1)\ne0$,
whence $\chi(u-\lambda 1)\ne0$ and  $w\ne\lambda$. 
In other words, $w\in\DD$.

To finish the proof, 
we show that $\Lambda(f)=cf(w)$ for all $f\in H^p$.
Given $f\in H^p$,  let us define $k(z):=(f(z)-f(w))/(z-w)$. 
Then $k\in H^p$ and  $f=f(w)1+(u-w1)k$. 
Applying $\Lambda$ to both sides of this 
last identity and using \eqref{E:Hplinfun},
we obtain 
\[
\Lambda(f)=f(w)\Lambda(1)+\chi(u-w1)\Lambda(k)=cf(w)+0,
\]
as desired.
\end{proof}

This leads to the following characterization of weighted composition operators.

\begin{theorem}\label{T:Hplinmap}
Let $0< p\le\infty$ and let $T:H^p\to\hol(\DD)$ be a linear map 
such that $(Tg)(z)\ne0$ for all outer functions $g\in H^p$ and all $z\in\DD$.
Then there exist holomorphic functions 
$\phi:\DD\to\DD$ and $\psi:\DD\to\CC\setminus\{0\}$ such that 
\[
Tf=\psi.(f\circ\phi)\qquad(f\in H^p).
\]
\end{theorem}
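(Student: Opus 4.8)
The plan is to reduce this to the functional statement of Theorem~\ref{T:Hplinfun} by freezing the evaluation point. For each fixed $z\in\DD$, define $\Lambda_z:H^p\to\CC$ by $\Lambda_z(f):=(Tf)(z)$. This is linear in $f$, and the hypothesis on $T$ guarantees exactly that $\Lambda_z(g)=(Tg)(z)\ne0$ for every outer function $g\in H^p$. Hence $\Lambda_z$ satisfies the hypotheses of Theorem~\ref{T:Hplinfun}, so there exist $w\in\DD$ and $c\in\CC\setminus\{0\}$, depending on $z$, such that $(Tf)(z)=cf(w)$ for all $f\in H^p$. Writing $\phi(z):=w$ and $\psi(z):=c$, we obtain the pointwise identity
\[
(Tf)(z)=\psi(z)\,f(\phi(z)) \qquad(f\in H^p,~z\in\DD),
\]
with $\phi:\DD\to\DD$ and $\psi:\DD\to\CC\setminus\{0\}$.

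The remaining task is to show that $\phi$ and $\psi$ are holomorphic. The natural strategy is to extract them by plugging in specific test functions whose images under $T$ are already known to be holomorphic (since $T$ maps into $\hol(\DD)$). First I would take $f:=1$ (the constant function, which lies in $H^p$), giving $(T1)(z)=\psi(z)$; since $T1\in\hol(\DD)$, this shows $\psi=T1$ is holomorphic on $\DD$. Moreover $\psi$ is nowhere zero because $1$ is outer. Next I would take $f:=u$ (the coordinate function $u(z)=z$, which is outer and lies in $H^p$), giving $(Tu)(z)=\psi(z)\phi(z)$; since $Tu\in\hol(\DD)$ and $\psi$ is a nowhere-vanishing holomorphic function, it follows that $\phi=(Tu)/\psi$ is holomorphic on $\DD$.

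I expect the main (though minor) obstacle to be confirming that the point $\phi(z)$ produced by Theorem~\ref{T:Hplinfun} is correctly identified by the coordinate function, and in particular that the two test-function computations are mutually consistent. Concretely, one must check that the same $w$ appears whether one reads it off from the point-evaluation structure or computes $(Tu)(z)/(T1)(z)$: this is automatic, since Theorem~\ref{T:Hplinfun} gives $(Tu)(z)=\psi(z)\,u(\phi(z))=\psi(z)\phi(z)$, so the ratio $(Tu)(z)/(T1)(z)$ equals $\phi(z)$ by construction. Once holomorphy of $\psi$ and $\phi$ is in hand, the factorization $Tf=\psi\cdot(f\circ\phi)$ holds by the pointwise identity, completing the proof.
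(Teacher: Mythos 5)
Your proof is correct and follows essentially the same route as the paper's: freeze $z\in\DD$, apply Theorem~\ref{T:Hplinfun} to the functional $f\mapsto (Tf)(z)$, and then identify $\psi=T1$ and $\phi=(Tu)/\psi$ by plugging in the test functions $1$ and $u$, so that holomorphy of $\psi$ and $\phi$ comes for free from $T$ mapping into $\hol(\DD)$. One small slip: the coordinate function $u(z)=z$ is inner, not outer (it vanishes at the origin), but this does not affect your argument, since the step where you use $f:=u$ only requires $u\in H^p$, not that $u$ belong to the set of outer functions.
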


Note that continuity of $T$ is not assumed.

\begin{proof}
Define $\psi:=T1$. 
This is a  holomorphic function on $\DD$, 
and is nowhere zero because $1$ is outer.
Define $\phi:=(Tu)/\psi$, where $u(z):=z$. 
This too  is a holomorphic function  on $\DD$. 
For $z\in\DD$, the map $f\mapsto (Tf)(z)$ 
is a linear functional on $H^p$ that is non-zero on outer functions. 
By Theorem~\ref{T:Hplinfun}, 
there exist $w\in\DD$ and $c\in\CC\setminus\{0\}$ 
such that $(Tf)(z)=cf(w)$ for all $f\in H^p$. 
Taking $f:=1$, we see that $c=\psi(z)$.
Taking $f:=u$, we see that $w=\phi(z)$. 
Thus  $\phi(z)\in\DD$ and $(Tf)(z)=\psi(z)f(\phi(z))$ for all $f\in H^p$.
\end{proof}

We denote by $\aut(\DD)$  the group of holomorphic automorphisms of $\DD$,
namely the set of functions of the form $\phi(z):=c(z-w)/(1-\overline{w}z)$, 
where $w\in\DD$ and $c\in\TT$.

\begin{theorem}\label{T:Hplinop}
Let $1\le p\le\infty$ and let $T:H^p\to H^p$ be a surjective linear map 
such that $(Tg)(z)\ne0$ for all outer functions $g\in H^p$ and all $z\in\DD$.
Then $T$ is an invertible operator,
and there exist $\phi\in\aut(\DD)$ and $\psi\in H^\infty\cap (H^\infty)^{-1}$ 
such that 
\[
Tf=\psi.(f\circ\phi)\qquad(f\in H^p).
\]
\end{theorem}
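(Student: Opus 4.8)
The plan is to build directly on Theorem~\ref{T:Hplinmap}, which already handles the case where the target is merely $\hol(\DD)$. Since here $T$ maps into $H^p$ itself and is assumed surjective, I expect to first apply Theorem~\ref{T:Hplinmap} to obtain holomorphic functions $\phi:\DD\to\DD$ and $\psi:\DD\to\CC\setminus\{0\}$ with $Tf=\psi\cdot(f\circ\phi)$ for all $f\in H^p$. The remaining work is to upgrade this generic weighted-composition representation into one where $\phi$ is an automorphism of $\DD$ and $\psi$ is a bounded invertible multiplier, and to deduce invertibility of $T$; the surjectivity hypothesis is exactly what should force these extra constraints.

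First I would extract information from the boundary cases of the representation. Taking $f:=1$ gives $\psi=T1\in H^p$, and taking $f:=u$ gives $\psi\cdot\phi=Tu\in H^p$, so $\psi$ and $\psi\phi$ both lie in $H^p$; since $\phi$ is bounded, the real constraint comes from surjectivity. Because $T$ is onto, every $g\in H^p$ equals $\psi\cdot(f\circ\phi)$ for some $f$, so in particular the constant function $1$ must be attained: there is $f_0\in H^p$ with $\psi\cdot(f_0\circ\phi)=1$, forcing $1/\psi=f_0\circ\phi$. I would then argue that $\psi$ and $1/\psi$ are both suitably controlled, pushing toward $\psi\in H^\infty\cap(H^\infty)^{-1}$. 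The key structural step is to pin down $\phi$: I expect the map $f\mapsto f\circ\phi$ to be forced to be a bijection of $H^p$ onto itself (after dividing out the nowhere-zero multiplier $\psi$), and the classical fact that a composition operator $C_\phi$ maps $H^p$ \emph{onto} $H^p$ only when $\phi\in\aut(\DD)$ is what I would invoke to conclude $\phi$ is an automorphism. This is the place where the restriction $1\le p\le\infty$ (rather than $0<p\le\infty$) presumably enters, since it guarantees $H^p$ is a genuine Banach space on which the relevant composition/multiplier theory is clean.

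Concretely, the plan is: set $\psi:=T1$ and $\phi:=(Tu)/\psi$ as before; show from surjectivity that $\psi$ is a bounded outer function with bounded reciprocal, i.e. $\psi\in H^\infty\cap(H^\infty)^{-1}$; show that $\phi\in\aut(\DD)$, either by using the surjectivity of the induced composition operator $C_\phi$ on $H^p$ or by producing explicit preimages of a separating family of functions and reading off that $\phi$ must be a disk automorphism; and finally assemble $Tf=\psi\cdot(f\circ\phi)$ with the inverse operator given explicitly by $g\mapsto \psi^{-1}\cdot g\circ\phi^{-1}$ (rescaled appropriately by $\psi\circ\phi^{-1}$), thereby exhibiting $T$ as invertible. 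The main obstacle I anticipate is the rigidity step showing $\phi\in\aut(\DD)$: ruling out non-automorphic self-maps requires translating surjectivity of $T$ into surjectivity of a composition operator and then invoking (or reproving) the classical characterization of invertible composition operators on $H^p$, and care is needed to disentangle the multiplier $\psi$ from the composition symbol $\phi$ cleanly, especially in the endpoint cases $p=\infty$ and $p=1$.
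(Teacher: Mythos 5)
Your opening step---applying Theorem~\ref{T:Hplinmap} to write $Tf=\psi.(f\circ\phi)$ with $\psi:=T1$ and $\phi:=(Tu)/\psi$---is exactly how the paper begins. The gap comes in your structural step, and it is a genuine circularity. You propose to ``divide out'' the multiplier $\psi$, deduce that the composition operator $C_\phi$ maps $H^p$ onto $H^p$, and then invoke the classical fact that a surjective composition operator has automorphic symbol. But surjectivity of $T=M_\psi C_\phi$ only tells you that $(1/\psi)H^p\subset C_\phi(H^p)$; to upgrade this to $H^p\subset C_\phi(H^p)$ you need $\psi H^p\subset H^p$, i.e.\ $\psi\in H^\infty$---which is the other half of what you are trying to prove. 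The hard information you do extract is much too weak: from $\psi=T1\in H^p$ and $1/\psi=f_0\circ\phi\in H^p$ you only learn that $\psi$ and $1/\psi$ both lie in $H^p$, and for $p<\infty$ this does not imply boundedness of either one. For instance $\psi(z)=(1-z)^{-\alpha}$ with $0<\alpha<1/p$ is unbounded, yet $\psi$ and $1/\psi$ are both (outer) functions in $H^p$. So, as set up, the proof of each of your two claims ($\phi\in\aut(\DD)$, $\psi\in H^\infty\cap(H^\infty)^{-1}$) presupposes the other, and your own caveat that ``care is needed to disentangle $\psi$ from $\phi$'' is precisely the unproved step.

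The missing idea---and what the paper actually does---is to establish operator-theoretic invertibility of $T$ \emph{before} attempting any structure theory. Since $T$ is surjective, $\phi$ is non-constant (otherwise the range of $T$ would be one-dimensional), and then $\ker T=\{0\}$ because $\phi(\DD)$ is open; next, since $H^p$ is a Banach space for $1\le p\le\infty$ and norm convergence in $H^p$ implies pointwise convergence on $\DD$, the graph of $T$ is closed, so $T$ is continuous by the closed graph theorem and invertible by Banach's isomorphism theorem. (This, rather than composition-operator theory, is where the hypothesis $1\le p\le\infty$ enters.) Once $T$ is known to be an \emph{invertible weighted composition operator}, the conclusion that $\phi\in\aut(\DD)$ and $\psi,1/\psi\in H^\infty$ is exactly Bourdon's theorem \cite[Theorem~2.1]{Bo14}, which the paper cites, and which it reproves later (Theorem~\ref{T:linop}) by a non-trivial argument: one uses the isomorphism inequality $\|f\|_{H^p}\le C\|Tf\|_{H^p}$ applied to preimages of $u^n\psi$, together with a spectral-radius computation, to show the function $\theta$ with $\theta\circ\phi=u$ maps $\DD$ into $\DD$. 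Your sketch contains no substitute for this step; purely algebraic manipulation of the identity $Tf=\psi.(f\circ\phi)$ and surjectivity will not produce it.
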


\begin{proof}
By Theorem~\ref{T:Hplinmap}, 
there exist holomorphic maps $\phi:\DD\to\DD$ and $\psi:\DD\to\CC\setminus\{0\}$  
such that $Tf=\psi.(f\circ\phi)$ for all $f\in H^p$. 
As $T$ is surjective,
it is clear that $\phi$ is non-constant, 
so $\ker T=\{0\}$ and $T$ is  injective. 
By the closed graph theorem, $T$ is continuous.
Hence, by Banach's isomorphism theorem, $T$ is an invertible operator.
Finally, the invertibility of $T$ implies that 
$\phi$ is an automorphism of $\DD$ and
that both $\psi$ and $1/\psi$ are bounded: 
this is proved for example in \cite[Theorem~2.1]{Bo14},
and we shall give another proof in Theorem~\ref{T:linop} below.
\end{proof}

It is known that, if $1\le p\le\infty$ and $p\ne2$, 
then every surjective isometry $T:H^p\to H^p$ is of the form
\begin{equation}\label{E:isometry}
Tf=c .(\phi')^{1/p}.(f\circ\phi) 
\qquad(f\in H^p),
\end{equation}
where $\phi\in\aut(\DD)$ and $c$ is a unimodular constant.
This  was first established in the cases $p=1,\infty$, 
independently by Nagasawa \cite{Na59} 
and  by de Leeuw, Rudin and Wermer \cite{DRW60}. 
It was later extended to all $p\ne2$ by Forelli \cite{Fo64}. 

When $p=2$, there  are lots of surjective isometries 
other than those in \eqref{E:isometry}.  
For example, the unitary operator on $H^2$ that exchanges $1$ and $z$ 
while fixing all powers $z^k~(k\ge2)$
is clearly not of the type given in \eqref{E:isometry}. 
The next result adds an additional hypothesis 
that is sufficient to recover \eqref{E:isometry} in the case $p=2$.

\begin{theorem}\label{T:H2isometry}
Let  $T:H^2\to H^2$ be a surjective isometry 
such that $(Tg)(z)\ne0$ for all outer functions $g\in H^2$ and all $z\in\DD$.
Then there exist $\phi\in\aut(\DD)$ and a unimodular constant $c$ such that 
\[
Tf=c.(\phi')^{1/2}.(f\circ\phi)\qquad(f\in H^2).
\]
\end{theorem}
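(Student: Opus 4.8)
The plan is to combine Theorem~\ref{T:Hplinop} with the extra isometric hypothesis. Since $T:H^2\to H^2$ is a surjective linear map whose values on outer functions never vanish, Theorem~\ref{T:Hplinop} (applied with $p=2$) already tells us that $T$ is invertible and that there exist $\phi\in\aut(\DD)$ and $\psi\in H^\infty\cap(H^\infty)^{-1}$ with $Tf=\psi.(f\circ\phi)$ for all $f\in H^2$. It therefore remains only to identify $\psi$: I must show that $\psi=c(\phi')^{1/2}$ for some unimodular constant $c$, where $(\phi')^{1/2}$ denotes a holomorphic square root of $\phi'$. Such a root exists and lies in $H^\infty\cap(H^\infty)^{-1}$, because $\phi'(z)=c_0(1-|w|^2)/(1-\overline{w}z)^2$ (for suitable $c_0\in\TT$, $w\in\DD$) is holomorphic and bounded above and below on $\DD$, hence nowhere zero on the simply connected domain $\DD$.

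First I would exploit the isometry through boundary integrals. Writing the $H^2$-norm as $\|f\|_2^2=\frac{1}{2\pi}\int_0^{2\pi}|f(e^{i\theta})|^2\,d\theta$ and using $Tf=\psi.(f\circ\phi)$, the identity $\|Tf\|_2=\|f\|_2$ becomes
\[
\frac{1}{2\pi}\int_0^{2\pi}|\psi(e^{i\theta})|^2\,|f(\phi(e^{i\theta}))|^2\,d\theta
=\frac{1}{2\pi}\int_0^{2\pi}|f(e^{i\theta})|^2\,d\theta
\qquad(f\in H^2).
\]
Since $\phi$ is an automorphism, it extends to a real-analytic diffeomorphism of $\TT$, and the change of variable $e^{i\eta}=\phi(e^{i\theta})$ gives $d\eta=|\phi'(e^{i\theta})|\,d\theta$, which transforms the right-hand side into $\frac{1}{2\pi}\int_0^{2\pi}|f(\phi(e^{i\theta}))|^2\,|\phi'(e^{i\theta})|\,d\theta$. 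Setting $g:=f\circ\phi$, which ranges over all of $H^2$ as $f$ does (the composition operator $C_\phi$ being invertible, with inverse $C_{\phi^{-1}}$), I would obtain
\[
\int_0^{2\pi}\bigl(|\psi(e^{i\theta})|^2-|\phi'(e^{i\theta})|\bigr)\,|g(e^{i\theta})|^2\,d\theta=0
\qquad(g\in H^2).
\]

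Next I would deduce the pointwise boundary identity $|\psi|^2=|\phi'|$ a.e.\ on $\TT$. The difference $h:=|\psi|^2-|\phi'|$ is a bounded real function, and by the Fej\'er--Riesz theorem every nonnegative trigonometric polynomial is of the form $|g|^2$ for some $g\in H^2$; hence the linear span of $\{|g|^2:g\in H^2\}$ contains every trigonometric polynomial. The orthogonality relation above then forces all Fourier coefficients of $h$ to vanish, so $h=0$ a.e. Consequently $u:=\psi/(\phi')^{1/2}$ lies in $H^\infty\cap(H^\infty)^{-1}$ and satisfies $|u|=1$ a.e.\ on $\TT$.

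Finally, I would argue that such a $u$ must be a unimodular constant. Being bounded with unimodular boundary values, $u$ is inner; being invertible in $H^\infty$, it is also outer, exactly as noted in the proof of Theorem~\ref{T:Hplinfun}. A function that is simultaneously inner and outer is constant, and its boundary modulus shows the constant is unimodular. Calling it $c$ gives $\psi=c(\phi')^{1/2}$ and hence $Tf=c.(\phi')^{1/2}.(f\circ\phi)$, as required. The step I expect to demand the most care is the passage from the integral identity to $|\psi|^2=|\phi'|$ a.e.: one must justify both the boundary change of variables for the automorphism $\phi$ and the density of $\{|g|^2\}$ that licenses cancelling the test functions.
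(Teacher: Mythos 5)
Your proposal is correct, and it diverges from the paper exactly at the step you flagged as delicate. After invoking Theorem~\ref{T:Hplinop} just as you do, the paper never touches boundary integrals: it introduces the model isometry $Sf:=(\phi')^{1/2}.(f\circ\phi)$, notes that $T\circ S^{-1}$ is then a surjective isometry of $H^2$ which is also a multiplication operator $f\mapsto h.f$ with $h=\psi/(\phi')^{1/2}$, and deduces from the fact that both $T\circ S^{-1}$ and its inverse are contractions that $\|h\|_\infty=\|1/h\|_\infty=1$ (the operator norm of a multiplication operator on $H^2$ being the sup norm); thus $|h|\equiv1$ on $\DD$, and $h$ is a unimodular constant by the maximum principle. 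Your route instead unpacks the isometry of $S$ by hand: the change of variables $d\eta=|\phi'(e^{i\theta})|\,d\theta$ on $\TT$ plus the Fej\'er--Riesz density argument yields the boundary identity $|\psi|^2=|\phi'|$ a.e., and then the inner-and-outer argument pins down $u=\psi/(\phi')^{1/2}$ as a unimodular constant. What the paper's factorization buys is economy: it black-boxes the change-of-variables computation inside the known fact that $S$ is a surjective isometry of $H^2$, and it works entirely inside $\DD$, so no almost-everywhere boundary-value considerations arise. What your argument buys is self-containedness: you re-derive the isometry of $S$ rather than citing it, at the cost of the extra care you correctly identified (boundary values of $f\circ\phi$, the change of variables on $\TT$, and the Fej\'er--Riesz step, all of which do go through). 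Both are complete proofs of the theorem.
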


\begin{proof}
By Theorem~\ref{T:Hplinop}, 
there exist  $\phi\in\aut(\DD)$ and  $\psi\in H^\infty$ 
such that $Tf=\psi.(f\circ\phi)$ for all $f\in H^2$. 
Define $S:H^2\to H^2$ by $S f:=(\phi')^{1/2}.(f\circ\phi)$. 
Then $S$ is an isometry of $H^2$ onto itself, 
consequently so is $T\circ S^{-1}$. 
A simple calculation shows that $(T\circ S^{-1})$ has the form 
$(T\circ S^{-1}) f= h.f$ for all $f\in H^2$, where $h\in\hol(\DD)$. 
The fact that both $T\circ S^{-1}$ and its inverse are contractions on $H^2$ 
implies that both $h$ and $1/h$ belong to $H^\infty$ 
with $\|h\|_\infty=\|1/h\|_\infty=1$. 
This implies that $h\equiv c$, a unimodular constant. 
Hence $T=cS$, as desired.
\end{proof}

\section{Extensions to other function spaces}\label{S:otherspaces}

The proof of Theorem~\ref{T:Hplinfun} uses special properties of the Hardy spaces,
and does not seem to extend easily to other families of spaces. 
However, if we are willing to assume the continuity of the  linear maps involved, 
then the theorems of the previous section
do indeed extend to a wide variety of other spaces, 
albeit with slightly different proofs.

In what follows, 
we shall consider a Banach space $X\subset\hol(\DD)$ with the following properties:
\begin{itemize}
\setlength{\itemindent}{25pt}
\item[(X1)] for each $w\in\DD$, the evaluation map $f\mapsto f(w):X\to\CC$ is continuous;
\item[(X2)] $X$ contains the polynomials and they form a dense subspace of $X$;
\item[(X3)] $X$ is shift-invariant: $f\in X\Rightarrow zf\in X$.
\end{itemize}
We write $\cM(X)$ for the \emph{multiplier algebra} of $X$, namely
\begin{align*}
\cM(X)&:=\{h\in\hol(\DD): hf\in X \text{~for all~} f\in X\},\\
\|h\|_{\cM(X)}&:=\sup\{\|hf\|_X:\|f\|_X\le1\}.
\end{align*}
Using property (X1) above, it is not hard to see that
$\cM(X)$ can be identified with a closed subalgebra of 
the algebra of all bounded linear operators on $X$, so it is a Banach algebra.
For each $w\in\DD$, 
the evaluation functional $h\mapsto h(w)$ is a character on $\cM(X)$,
so $|h(w)|\le\|h\|_{\cM(X)}$. 
It follows that $\cM(X)\subset H^\infty$.
As $X$ contains the constants,  we also have $\cM(X)\subset X$.

We also consider a subset $Y$ of $X$ with the following properties:
\begin{itemize}
\setlength{\itemindent}{25pt}
\item[(Y1)] if $g\in X$ and $0<\inf_\DD|g|\le\sup_\DD|g|<\infty$, then $g\in Y$;
\item[(Y2)] if $g(z):=z-\lambda$ where $\lambda\in\TT$, then $g\in Y$.
\end{itemize}
 
Examples of spaces $X$ satisfying (X1)--(X3) include:
\begin{itemize}
\item the Hardy spaces $H^p~(1\le p<\infty)$;
\item the Bergman spaces $A^p~(1\le p<\infty)$;
\item the holomorphic Besov spaces $B_p~(1\le p<\infty)$;
\item the weighted Dirichlet spaces $\cD_\alpha~(0\le\alpha\le1)$;
\item the holomorphic Sobolev spaces $S^p:=\{f:f'\in H^p\}~ (1\le p<\infty)$;
\item the disk algebra $A(\DD)$;
\item the little Bloch space $\cB_0$;
\item the  space $\vmoa$ of functions of vanishing mean oscillation;
\item the de Branges--Rovnyak spaces $\cH(b)$, for non-extreme points $b$  in the unit ball of $H^\infty$.
\end{itemize}

Examples of sets $Y$ satisfying (Y1)--(Y2) include:
\begin{itemize}
\item the nowhere-zero functions in $X$;
\item the outer functions in $X$;
\item the cyclic functions  for the shift, if $X=H^p,A^p, B_p,\cD_\alpha,\cB_0$ or $\vmoa$.
\end{itemize}

For background on these various function spaces, we refer to the books \cite{FM15} and \cite{Zh07}.

\begin{theorem}\label{T:linfun}
Let $X\subset\hol(\DD)$ be a Banach space satisfying (X1)--(X3) above.
Let $Y\subset X$ be a set satisfying (Y1)--(Y2) above.
Let $\Lambda:X\to\CC$ be a continuous linear functional such that 
$\Lambda(g)\ne0$ for all $g\in Y$.
Then there exist $w\in\DD$ and  $c\in\CC\setminus\{0\}$ such that 
\[
\Lambda(f)=cf(w) \qquad(f\in X).
\]
\end{theorem}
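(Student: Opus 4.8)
The plan is to exploit the assumed continuity of $\Lambda$ so as to reduce to the classical GKZ theorem (Theorem~\ref{T:GKZ}) applied to the multiplier algebra, followed by a density argument, rather than invoking the module version of the previous section. Set $c:=\Lambda(1)$. Since the constant function $1$ satisfies $\inf_\DD|1|=\sup_\DD|1|=1$, property~(Y1) gives $1\in Y$, so $c\neq0$.

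Next I would manufacture a character on $\cM(X)$. Recalling that $\cM(X)$ is a unital Banach algebra with $\cM(X)\subset X$, define $\chi:\cM(X)\to\CC$ by $\chi(h):=\Lambda(h)/c$, viewing each multiplier $h$ as an element of $X$. This map is linear and satisfies $\chi(1)=1$. The key observation is that an invertible element $h\in\cM(X)$ has both $h$ and $1/h$ in $\cM(X)\subset H^\infty$, so that $0<\inf_\DD|h|\le\sup_\DD|h|<\infty$; hence $h\in Y$ by~(Y1), and therefore $\Lambda(h)\neq0$, i.e.\ $\chi(h)\neq0$. Theorem~\ref{T:GKZ} then shows that $\chi$ is a character on $\cM(X)$.

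It remains to identify $\chi$, and hence $\Lambda$, with evaluation at a point of $\DD$. Writing $u(z):=z$, property~(X3) together with the algebra structure puts every polynomial in $\cM(X)$; set $w:=\chi(u)=\Lambda(u)/c$. For $|\lambda|\ge1$ the function $u-\lambda 1$ lies in $Y$ --- by~(Y1) when $|\lambda|>1$ (it is then bounded above and below on $\DD$) and by~(Y2) when $|\lambda|=1$ --- so $\Lambda(u-\lambda 1)\neq0$, which forces $w\neq\lambda$. Hence $w\in\DD$. Since $\chi$ is multiplicative with $\chi(1)=1$ and $\chi(u)=w$, we obtain $\chi(p)=p(w)$ for every polynomial $p$, and consequently $\Lambda(p)=c\,p(w)$.

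Finally, because the polynomials are dense in $X$ by~(X2), $\Lambda$ is continuous by hypothesis, and $f\mapsto f(w)$ is continuous on $X$ by~(X1) (this is where $w\in\DD$ is needed), the identity $\Lambda(f)=cf(w)$ extends from polynomials to all of $X$. I expect the only genuine obstacle to be the passage to the multiplier algebra: one must check that $\cM(X)$ really supplies a unital Banach algebra and, crucially, that its invertible elements fall inside $Y$, which is exactly what~(Y1) delivers. Once the character $\chi$ is in hand and $w$ is pinned down in $\DD$, the remaining steps are routine.
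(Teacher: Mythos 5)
Your proposal is correct and follows essentially the same route as the paper's own proof: restrict $\Lambda$ to the multiplier algebra $\cM(X)$, use (Y1) to see that invertible multipliers lie in $Y$ so that the classical GKZ theorem makes the (normalized) restriction a character, locate $w:=\chi(u)$ in $\DD$ via (Y1)/(Y2) applied to $u-\lambda$ with $|\lambda|\ge1$, and conclude by density of polynomials together with continuity of $\Lambda$ and of evaluation at $w$. The only difference is cosmetic: the paper normalizes $\Lambda$ itself rather than defining a separate functional $\chi=\Lambda/c$.
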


\begin{proof}
Since $1\in Y$, we have $c:=\Lambda(1)\ne0$. 
Replacing $\Lambda$ by $\Lambda/c$, we may suppose that $\Lambda(1)=1$.
If $h$ is invertible in $\cM(X)$, 
then both $h$ and $1/h$ belong to $H^\infty\cap X$, 
so $h\in Y$, and $\Lambda(h)\ne0$.
By Theorem~\ref{T:GKZ}, $\Lambda$ is a character on $\cM(X)$. 
As $X$ is shift-invariant, we have $u\in\cM(X)$ (where $u(z):=z$).
Set $w:=\Lambda(u)$. 
Then $\Lambda(p)=p(w)$ for all polynomials $p$.
If $|\lambda|\ge1$ then $u-\lambda\in Y$, 
so $\Lambda(u-\lambda)\ne0$. 
Consequently $w\in\DD$,
and the evaluation functional  $f\mapsto f(w)$ is continuous on $X$. 
As polynomials are dense in $X$, 
we conclude that $\Lambda(f)=f(w)$ for all $f\in X$.
\end{proof}

In the next theorem, we endow $\hol(\DD)$ with its usual Fr\'echet-space topology.

\begin{theorem}\label{T:linmap}
Let $X\subset\hol(\DD)$ be a Banach space satisfying (X1)--(X3) above.
Let $Y\subset X$ be a set satisfying (Y1)--(Y2) above.
Let $T:X\to \hol(\DD)$ be a continuous linear map such that 
$Tg(z)\ne0$ for all $g\in Y$ and all $z\in\DD$. 
Then there exist holomorphic functions 
$\phi:\DD\to\DD$ and $\psi:\DD\to\CC\setminus\{0\}$ such that
\[
Tf=\psi.(f\circ\phi) \qquad (f\in X).
\]
\end{theorem}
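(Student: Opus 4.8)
The plan is to mimic the structure of the proof of Theorem~\ref{T:Hplinmap}, which derived the weighted-composition structure of a map by applying the pointwise-evaluation version of the functional theorem at each point $z\in\DD$. Here the analogous input is Theorem~\ref{T:linfun} rather than Theorem~\ref{T:Hplinfun}, but the skeleton is identical. First I would set $\psi:=T1$ and observe that, since $1\in Y$ (this follows from (Y1), as the constant function $1$ is bounded above and below), the hypothesis gives $\psi(z)=(T1)(z)\ne0$ for every $z\in\DD$; moreover $\psi\in\hol(\DD)$ because $T$ maps into $\hol(\DD)$. Next I would set $\phi:=(Tu)/\psi$, where $u(z):=z$, which is a well-defined holomorphic function on $\DD$ since $\psi$ is nowhere zero.

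The key step is to fix an arbitrary point $z\in\DD$ and consider the evaluation-composed functional $\Lambda_z:X\to\CC$, $\Lambda_z(f):=(Tf)(z)$. I would verify that $\Lambda_z$ satisfies the hypotheses of Theorem~\ref{T:linfun}: it is linear, and it is continuous because $T:X\to\hol(\DD)$ is continuous and the evaluation functional at $z$ is continuous on $\hol(\DD)$ in its Fr\'echet topology. The nonvanishing condition $\Lambda_z(g)=(Tg)(z)\ne0$ for all $g\in Y$ is exactly the hypothesis on $T$. Theorem~\ref{T:linfun} then yields a point $w=w(z)\in\DD$ and a scalar $c=c(z)\in\CC\setminus\{0\}$, depending a priori on $z$, such that $(Tf)(z)=c(z)\,f(w(z))$ for all $f\in X$. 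Specializing $f:=1$ identifies $c(z)=(T1)(z)=\psi(z)$, and specializing $f:=u$ identifies $w(z)=(Tu)(z)/\psi(z)=\phi(z)$. Since $w(z)\in\DD$ by the theorem, this shows $\phi(z)\in\DD$ and $(Tf)(z)=\psi(z)\,f(\phi(z))$ for every $z\in\DD$ and every $f\in X$, which is precisely the asserted identity $Tf=\psi.(f\circ\phi)$.

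The only point requiring a little care — and the step I expect to be the main (though minor) obstacle — is the verification that $\Lambda_z$ is continuous, since that is the sole hypothesis of Theorem~\ref{T:linfun} beyond the nonvanishing condition. This reduces to the fact that point evaluation $g\mapsto g(z)$ is continuous on $\hol(\DD)$ equipped with its usual topology of locally uniform convergence, composed with the assumed continuity of $T$; both facts are standard. The holomorphy of $\phi$ and $\psi$ as functions on $\DD$ is automatic from their definitions in terms of $T1$ and $Tu$, with no separate pointwise-to-global regularity argument needed, exactly as in the proof of Theorem~\ref{T:Hplinmap}.
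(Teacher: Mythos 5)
Your proposal is correct and follows exactly the route the paper takes: the paper's proof of Theorem~\ref{T:linmap} is the one-line remark that it follows from Theorem~\ref{T:linfun} just as Theorem~\ref{T:Hplinmap} followed from Theorem~\ref{T:Hplinfun}, which is precisely the pointwise-evaluation argument you spelled out. Your additional care about the continuity of $\Lambda_z(f):=(Tf)(z)$ (composition of the continuous $T$ with a point evaluation on $\hol(\DD)$) and about $1\in Y$ via (Y1) fills in exactly the details the paper leaves implicit.
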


\begin{proof}
This follows from Theorem~\ref{T:linfun} in just the same way that 
Theorem~\ref{T:Hplinmap} was deduced from Theorem~\ref{T:Hplinfun}.
\end{proof}

\begin{theorem}\label{T:linop}
Let $X\subset\hol(\DD)$ be a Banach space satisfying (X1)--(X3) above, 
and let $Y\subset X$ be a set satisfying (Y1)--(Y2) above.
Suppose in addition
that $X$ is M\"obius-invariant:
\[
f\in X, ~\phi\in\aut(\DD)\Rightarrow f\circ\phi\in X.
\]
Let $T:X\to X$ be a continuous linear surjection such that 
$Tg(z)\ne0$ for all $g\in Y$ and all $z\in\DD$. 
Then $T$ is invertible, 
and there exist $\phi\in \aut(\DD)$ and $\psi\in \cM(X)\cap \cM(X)^{-1}$ such that
\[
Tf=\psi.(f\circ\phi) \qquad (f\in X).
\]
\end{theorem}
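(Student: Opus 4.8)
The plan is to build directly on Theorem 4 (T:linmap), which already gives the weighted-composition form $Tf=\psi.(f\circ\phi)$ with $\phi:\DD\to\DD$ and $\psi:\DD\to\CC\setminus\{0\}$ holomorphic; the work lies entirely in upgrading this to invertibility, $\phi\in\aut(\DD)$, and $\psi\in\cM(X)\cap\cM(X)^{-1}$. First I would establish injectivity: since $T$ is surjective and $X$ contains the non-constant polynomial $u(z):=z$, the map $\phi$ cannot be constant (a constant $\phi$ would force the range of $T$ to lie inside $\CC\cdot\psi$, contradicting surjectivity onto the infinite-dimensional space $X$). With $\phi$ non-constant and $\psi$ nowhere zero, the equation $\psi.(f\circ\phi)=0$ forces $f\circ\phi\equiv0$, hence $f\equiv0$ on the open set $\phi(\DD)$ and so $f\equiv0$; thus $\ker T=\{0\}$. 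Being a continuous linear bijection between Banach spaces, $T$ is invertible by the open mapping theorem.

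Next I would extract the structure of $\phi$ and $\psi$ from invertibility. The key observation is that the inverse $T^{-1}$ is also a continuous linear bijection of $X$. From $Tf=\psi.(f\circ\phi)$ we get $T^{-1}$ acting as $g\mapsto (1/\psi\circ\phi^{-1}).(g\circ\phi^{-1})$, at least formally, which suggests $\phi$ must be a bijection of $\DD$ and hence an automorphism. To make this rigorous without circularity, I would argue that $\cM(X)\subset H^\infty$ (established in the text) together with Möbius-invariance pins down $\phi$: for each polynomial $p$, the function $p\circ\phi=\psi^{-1}.T p$ must lie in $X$ since $\psi$ is a multiplier candidate; testing $p=u$ shows $\phi\in X\subset H^\infty$, and a composition/degree argument using surjectivity forces $\phi$ to omit no values of $\DD$, whence $\phi\in\aut(\DD)$.

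The cleanest route, and the one I expect the paper intends (it references this as an alternative to \cite[Theorem~2.1]{Bo14}), is to reduce to the multiplier case by composing away the automorphism. Once $\phi\in\aut(\DD)$ is known, Möbius-invariance of $X$ ensures the composition operator $C_\phi:f\mapsto f\circ\phi$ maps $X$ boundedly and invertibly onto itself, with inverse $C_{\phi^{-1}}$. Then $T\circ C_\phi^{-1}$ is an invertible operator of the form $f\mapsto \psi.f$, i.e. multiplication by $\psi$. An invertible multiplication operator has $\psi\in\cM(X)$ with $\psi^{-1}\in\cM(X)$, giving $\psi\in\cM(X)\cap\cM(X)^{-1}$ as required, and the factorization $Tf=\psi.(f\circ\phi)$ follows.

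I expect the \textbf{main obstacle} to be proving that $\phi$ is surjective onto $\DD$ (hence an automorphism) using only that $T$ is a surjection of $X$ onto $X$, while avoiding any appeal to the conclusion we are trying to prove. The delicate point is that surjectivity of $T$ is a statement about $X$, not about all of $\hol(\DD)$, so one cannot simply solve $\psi.(f\circ\phi)=g$ pointwise; instead one must use density of polynomials (X2), continuity of point evaluations (X1), and the fact that $\psi$ is forced to be a genuine multiplier, to transfer surjectivity into the requirement that $\phi$ take every value in $\DD$. Handling the boundary behaviour of $\phi$ and ruling out the case where $\phi(\DD)$ is a proper subdomain is the crux; Möbius-invariance is exactly the hypothesis that makes the composition-operator reduction legitimate and lets the multiplier-algebra machinery finish the argument.
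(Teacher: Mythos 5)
There is a genuine gap: your proposal correctly identifies that everything hinges on proving $\phi\in\aut(\DD)$, but it never actually supplies that argument. The two routes you sketch for it do not work as stated. The ``formal'' computation of $T^{-1}$ presupposes that $\phi$ is a bijection of $\DD$, which is what must be proved, and your fallback --- ``$p\circ\phi=\psi^{-1}.Tp$ must lie in $X$ since $\psi$ is a multiplier candidate'' --- assumes that dividing by $\psi$ keeps you inside $X$, i.e.\ essentially that $1/\psi\in\cM(X)$; but that is part of the conclusion, and in the paper it is only established \emph{after} $\phi\in\aut(\DD)$ is known (via $T\circ C_\phi^{-1}$). The side claim $\phi\in X\subset H^\infty$ is also unsound, since (X1)--(X3) do not force $X\subset H^\infty$ (e.g.\ Bergman spaces); it is harmless only because Theorem~\ref{T:linmap} already gives $\phi(\DD)\subset\DD$. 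The phrase ``a composition/degree argument using surjectivity forces $\phi$ to omit no values'' is precisely the crux, and no such argument is given.

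The paper fills this gap with a concrete construction that your proposal is missing. Since $u\psi=z(T1)\in X$ by shift-invariance (X3), surjectivity yields $\theta\in X$ with $T\theta=u\psi$, hence $\theta\circ\phi=u$; so $\phi$ is injective and $\phi\circ\theta=\mathrm{id}$ on $\phi(\DD)$. The key step is to show $\theta(\DD)\subset\DD$, after which the identity principle gives $\phi\circ\theta=\mathrm{id}$ on all of $\DD$ and hence $\phi\in\aut(\DD)$. This is done by a power trick: for each $n$, solving $T\theta_n=u^n\psi$ forces $\theta_n=\theta^n$, and the inequality $\|f\|_X\le C\|Tf\|_X$ (from Banach's isomorphism theorem, which you did obtain) gives
\[
\|\theta^n\|_X\le C\,\|u^n\|_{\cM(X)}\,\|\psi\|_X .
\]
Taking $n$th roots and invoking the lemma that the spectrum of $u$ in $\cM(X)$ is $\overline{\DD}$ (so its spectral radius is $1$), together with continuity of point evaluations (X1), yields $|\theta(w)|\le1$ on $\DD$, and the maximum principle gives strict inequality. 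Your final step --- composing with $C_\phi^{-1}$ to conclude $\psi\in\cM(X)\cap\cM(X)^{-1}$ --- does match the paper and is fine once $\phi\in\aut(\DD)$ is in hand, but without the spectral-radius argument (or a substitute for it) the proof is incomplete at its central point.
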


For the proof, it is convenient to establish a lemma. 
As usual, we write $u(z):=z$.

\begin{lemma}
The spectrum of $u$ in $\cM(X)$ is equal to $\overline{\DD}$.
\end{lemma}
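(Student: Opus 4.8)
The plan is to prove the two inclusions $\overline{\DD}\subseteq\sigma(u)$ and $\sigma(u)\subseteq\overline{\DD}$ separately, where $\sigma(u)$ denotes the spectrum of $u$ in $\cM(X)$. The first inclusion is the easy one: if $\lambda\in\overline{\DD}$ and $u-\lambda$ were invertible in $\cM(X)$, then its inverse would have to be the function $z\mapsto 1/(z-\lambda)$, which would then lie in $\cM(X)\subseteq H^\infty\subseteq\hol(\DD)$. But $1/(z-\lambda)$ is not even holomorphic on $\DD$ when $\lambda\in\DD$, and is holomorphic but unbounded when $\lambda\in\TT$; in either case it is not in $\cM(X)$. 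Hence $u-\lambda$ is not invertible, and $\overline{\DD}\subseteq\sigma(u)$. (For $\lambda\in\DD$ one may alternatively note that the character $h\mapsto h(\lambda)$ on $\cM(X)$ annihilates $u-\lambda$.)

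The reverse inclusion $\sigma(u)\subseteq\overline{\DD}$ amounts to showing that the spectral radius of $u$ in $\cM(X)$ is at most $1$, and this is where I expect the real work to lie, since a priori the multiplier norm of $u^n$ could grow faster than the supremum norm. To control it I would exploit the M\"obius-invariance of $X$ assumed in Theorem~\ref{T:linop}. For each $\phi\in\aut(\DD)$, M\"obius-invariance makes the composition operator $C_\phi\colon f\mapsto f\circ\phi$ a bijection of $X$; its graph is closed by property~(X1), so by the closed graph theorem $C_\phi$ is a bounded invertible operator on $X$. A direct computation gives $C_\phi M_h C_\phi^{-1}=M_{h\circ\phi}$ for multiplication operators, so $h\mapsto h\circ\phi$ maps $\cM(X)$ into itself and is a unital algebra automorphism of $\cM(X)$. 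Since unital algebra isomorphisms preserve spectra, this yields $\sigma(u)=\sigma(u\circ\phi)$ for every $\phi\in\aut(\DD)$. Taking $\phi$ to be a rotation shows that $\sigma(u)$ is invariant under rotation about the origin.

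With these tools in hand I would argue by contradiction. Set $R:=\max\{|\lambda|:\lambda\in\sigma(u)\}$, the spectral radius; by the first paragraph $R\ge1$, and rotation-invariance gives $R\in\sigma(u)$. Suppose $R>1$, and choose $t\in(0,1/R)$, so that the pole $1/t$ of the automorphism $\phi_t(z):=(z-t)/(1-tz)$ lies strictly outside $\sigma(u)$. Then $\phi_t$ is holomorphic on a neighbourhood of $\sigma(u)$, the element $1-tu$ is invertible in $\cM(X)$, and the holomorphic functional calculus identifies $\phi_t(u)=(u-t)(1-tu)^{-1}$ with the function $\phi_t$ itself; the spectral mapping theorem then gives $\sigma(\phi_t)=\phi_t(\sigma(u))$. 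On the other hand the automorphism $h\mapsto h\circ\phi_t$ gives $\sigma(\phi_t)=\sigma(u\circ\phi_t)=\sigma(u)$. Combining the two, $\phi_t$ maps $\sigma(u)$ onto itself. But a short calculation shows $\phi_t(R)=(R-t)/(1-tR)>R$ whenever $R>1$ and $0<t<1/R$, so $\phi_t(R)\in\sigma(u)$ has modulus exceeding $R$, contradicting the definition of $R$. Hence $R\le1$, so $\sigma(u)\subseteq\overline{\DD}$, which together with the first inclusion completes the proof.

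The main obstacle is precisely the second inclusion: bounding the spectral radius in a setting where the multiplier norm need not be controlled by the supremum norm. The device that overcomes it is the conjugation identity $C_\phi M_h C_\phi^{-1}=M_{h\circ\phi}$, which upgrades M\"obius-invariance into an automorphism-invariance of the spectrum; once that is established, the spectral mapping theorem applied to the well-chosen automorphism $\phi_t$ forces the spectrum to collapse onto $\overline{\DD}$.
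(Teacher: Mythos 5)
Your proof is correct, but for the substantive inclusion $\sigma(u)\subseteq\overline{\DD}$ it takes a genuinely different route from the paper's. The shared ingredient is that M\"obius-invariance, (X3) and the closed graph theorem make every $\phi\in\aut(\DD)$ a multiplier of $X$: your conjugation identity $C_\phi M_h C_\phi^{-1}=M_{h\circ\phi}$, applied with $h=u$, is precisely how one sees the paper's assertion that $(u-w)/(1-\overline{w}u)\in\cM(X)$ for each $w\in\DD$. From there, however, the paper finishes with bare algebra: since $(1-\overline{w}u)^{-1}=(1-|w|^2)^{-1}\bigl(1+\overline{w}\,\phi_w\bigr)$ where $\phi_w:=(u-w)/(1-\overline{w}u)$, the resolvent of $u$ is exhibited explicitly as a multiplier at every point $1/\overline{w}$ of modulus greater than $1$, and no spectral theory beyond the definition of the spectrum is needed. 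You instead promote the conjugation to a spectrum-preserving unital automorphism $h\mapsto h\circ\phi$ of $\cM(X)$, deduce that $\sigma(u)$ is rotation-invariant and satisfies $\phi_t(\sigma(u))=\sigma(u)$ (via the rational functional calculus and the spectral mapping theorem), and then reach a contradiction by pushing a maximal-modulus spectral point outward with a hyperbolic automorphism. Your argument is sound --- in particular you correctly handle the delicate point that inverses in the function algebra $\cM(X)$ must be pointwise inverses, so that $(u-t)(1-tu)^{-1}$ really is the function $\phi_t$ --- but it invokes heavier machinery (functional calculus, spectral mapping, compactness and non-emptiness of the spectrum, argument by contradiction) where the paper's manipulation is two lines and constructive. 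What your route buys is a conceptually attractive by-product, namely that $\aut(\DD)$ acts on $\cM(X)$ by spectrum-preserving automorphisms; what the paper's buys is brevity and an explicit inverse. On the easy inclusion $\overline{\DD}\subseteq\sigma(u)$ the two proofs essentially coincide: the paper uses evaluation characters plus closedness of the spectrum, while you argue directly that the pointwise inverse $1/(z-\lambda)$ fails to be holomorphic on $\DD$, or, for $\lambda\in\TT$, fails to be bounded.
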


\begin{proof}
For each $w\in\DD$, 
the map $h\mapsto h(w)$ is a character on $\cM(X)$,
so $w$ belongs to the spectrum of $u$. 
As the spectrum of $u$ is compact, it must contain $\overline{\DD}$.

By (X3) and M\"obius-invariance, 
 $(u-w)/(1-\overline{w}u)$ is a multiplier of $X$ for each $w\in\DD$.
Subtracting and multiplying by suitable constants,
we see that $(1-\overline{w}u)^{-1}\in\cM(X)$ for all $w\in\DD$. 
Thus the spectrum of $u$ is contained within  $\overline{\DD}$. 
\end{proof}

\begin{proof}[of Theorem~\ref{T:linop}]
By the closed graph theorem, 
the inclusion map from $X$ into $\hol(\DD)$ is continuous,
so $T$ may be regarded as a continuous map $:X\to\hol(\DD)$.
By Theorem~\ref{T:linmap}, 
there are holomorphic functions 
$\phi:\DD\to\DD$ and $\psi:\DD\to\CC\setminus\{0\}$  such that
$Tf=\psi.(f\circ\phi)$ for all $f\in X$. 
As $T$ is surjective, $\phi$ is non-constant, so $T$ is also injective. 
By Banach's isomorphism theorem, $T$ is invertible, 
and there is a constant $C$ such that
$\|f\|_X\le C\|Tf\|_X$ for all $f\in X$.

We next show that $\phi$ is an automorphism of $\DD$.
As $u\psi=z(T1)\in X$, 
there exists $\theta\in X$ such that $u\psi=T\theta$. 
Then $u\psi=\psi.(\theta\circ\phi)$, 
whence $(\theta\circ\phi)(z)=z$ for all $z\in\DD$. 
Applying $\phi$ to both sides, 
we have $(\phi\circ\theta)(w)=w$ for all $w\in\phi(\DD)$. 
We claim that $\theta(\DD)\subset\DD$. 
If so, then $\phi\circ\theta$ is well-defined on $\DD$, 
and by the identity principle $(\phi\circ\theta)(w)=w$ for all $w\in\DD$, 
showing that $\phi$ is indeed an automorphism.

To justify the claim, 
observe that, for each $n\ge1$, as $u^n\psi=z^n(T1)\in X$,  
there exists $\theta_n\in X$ such that $u^n\psi=T\theta_n$. 
Then we have  $u^n\psi=\psi.(\theta_n\circ\phi)$, 
so $\theta_n\circ\phi=u^n=\theta^n\circ \phi$ 
and  $\theta_n=\theta^n$. 
Thus $\theta^n\in X$ for all $n$ and $T(\theta^n)=u^n\psi$.
By the Banach isomorphism theorem inequality, it follows that
\[
\|\theta^n\|_X\le C\|u^n\psi\|_X=C\|u^n\|_{\cM(X)}\|\psi\|_X.
\]
Taking $n$th roots and letting $n\to\infty$, we obtain that 
\[
\limsup_{n\to\infty}\|\theta^n\|_X^{1/n}\le \limsup_{n\to\infty}\|u^n\|_{\cM(X)}^{1/n}.
\]
By the lemma and the spectral radius formula, 
the right-hand side equals $1$.
As point evaluations at points of $\DD$ are continuous on $X$, 
the left-hand side is bounded below by $|\theta(w)|$ for each $w\in\DD$.
It follows that $|\theta(w)|\le 1$ for all $w\in\DD$.
As $\theta$ is non-constant, 
the maximum principle implies that $|\theta(w)|<1$ for all $w\in\DD$.
This proves the claim.

Finally, we show that $\psi\in\cM(X)\cap\cM(X)^{-1}$. 
Let $C_\phi(f):=f\circ \phi$. 
Using the M\"obius-invariance of $X$ and the closed graph theorem,
we see that $C_\phi$ is an isomorphism of $X$ onto itself, 
and consequently so too is $T\circ C_\phi^{-1}$. 
But $T\circ C_\phi^{-1}$ is just the multiplication operator $f\mapsto \psi.f$, 
so both $\psi$ and $1/\psi$ are multipliers of $X$.
\end{proof}

\begin{remarks*}
(i) All the examples of spaces $X$ satisfying (X1)--(X3) listed earlier are M\"obius--invariant, 
with the exception of the de Branges--Rovnyak spaces $\cH(b)$.

(ii) Neither the Bloch space $\cB$  
nor the space $\bmoa$ of holomorphic functions of bounded mean oscillation satisfies (X2), 
because polynomials are not dense. 
However, these spaces are the biduals of $\cB_0$ and $\vmoa$ respectively, 
and it is not hard to see that Theorems~\ref{T:linfun}--\ref{T:linop} remain true for them 
provided that one replaces norm-continuity by weak*-continuity throughout.

(iii) If one tries to apply Theorem~\ref{T:GKZmodule} directly to the spaces in \S\ref{S:otherspaces}, 
then one runs up against some interesting problems. 
For example, can every function $X$ can be factorized as a multiplier times a cyclic function? 
Can every function in $X$ be written as the quotient of two multipliers of $X$?
In the case when $X$ is the Dirichlet space, 
this last question was posed as a problem at the end of \S3  in \cite{Ro06}, 
and as far as we know it is still open. 
\end{remarks*}

\begin{acknowledgements}
This paper arose from a question of Andrew Mullhaupt, relayed to us by Malik Younsi,
as to whether a result along lines of Theorem~\ref{T:H2isometry} might be true.
We are grateful to both of them for providing this stimulation.
We also thank Ken Davidson for suggesting a simplification of the proof of Theorem~\ref{T:Hplinfun}.
\end{acknowledgements}

\affiliationone{
Thomas Ransford and Javad Mashreghi\\
D\'{e}partement de math\'{e}matiques et de statistique\\
Universit\'{e} Laval\\Qu\'{e}bec (QC) \\ Can\-a\-da G1V 0A6\\
\email{javad.mashreghi@mat.ulaval.ca\\
thomas.ransford@mat.ulaval.ca}}

\end{document}